\documentclass[10pt]{article}

\usepackage[USenglish]{babel} 
\usepackage[T1]{fontenc}
\usepackage[ansinew]{inputenc}

\usepackage{lmodern} 
\usepackage{amsthm,amsfonts,amssymb}
\usepackage{graphicx}
\usepackage{amsmath}

\pdfpagewidth 8.5in
\pdfpageheight 11in
\setlength\topmargin{0in}
\setlength\headheight{0in}
\setlength\headsep{0in}
\setlength\textheight{8.5in}
\setlength\textwidth{6in}
\setlength\oddsidemargin{.25in}
\setlength\evensidemargin{.25in}
\setlength\headheight{0pt}

\usepackage{graphicx} 
\usepackage{subfig} 


\usepackage{amsmath}
\usepackage{amsthm}
\usepackage{amsfonts}
\usepackage{graphicx}
\usepackage{fancyhdr}

\newtheorem*{result*}{Main Result}
\newtheorem{mydef}{Definition}
\newtheorem{example}{Example}
\newtheorem{theorem}{Theorem}
\newtheorem{lemma}[mydef]{Lemma}
\newtheorem{corollary}[mydef]{Corollary}

\DeclareMathOperator{\New}{New}

\DeclareMathOperator{\Conv}{Conv}
\def\excise#1{}

\usepackage{setspace}
 
\singlespacing        
\begin{document}
\sffamily
\title{Bounding the Degree of Belyi Polynomials}
\author{Jose Rodriguez}
\date{November 12, 2011}
\maketitle

\begin{abstract}
Belyi's theorem states that a Riemann surface $X$, as an algebraic curve, is defined over $\overline{\mathbb{Q}}$ if and only if there exists a holomorphic function $B$ taking $X$ to $P^1\mathbb{C}$ with at most three critical values $\{0,1,\infty\}$.
By restricting to the case where $X=P^1\mathbb{C}$ and our holomorphic functions are  Belyi polynomials, we define a Belyi height of an algebraic number, $\mathcal{H}(\lambda)$, to be the minimal degree of  Belyi polynomials with $B(\lambda)\in\{0,1\}$.
Using the combinatorics of Newton polygons, we prove for non-zero $\lambda$ with non-zero  $p$-adic valuation, the Belyi height of $\lambda$  is greater than or equal to $p$
We also give examples of algebraic numbers which show our bounds are sharp.
\end{abstract}

\pagestyle{plain} 


\section{Introduction}

In this paper we fix an algebraic closure of $p$-adic numbers and denote it as $\overline{\mathbb{Q}}_p$.
We denote an embedded algebraic closure of the rational numbers in $\overline{\mathbb{Q}}_p$   as $\overline{\mathbb{Q}}$. 
A polynomial $B(x)$ in $\mathbb{\overline{Q}}[x]$ is said to have a critical point at $x_{i}$ if its derivative $B'(x)$ vanishes at $x_{i}$.
We say  $B(x)$ has a critical value of $B(x_i)$ when $x_i$ is a critical point.
A polynomial is said to be  a general Belyi polynomial if its critical values are contained in $\{0,1\}$.
Since composing a general Belyi polynomial  with any linear factor $(\gamma x-\alpha)$  yields another general Belyi polynomial, we normalize our set of polynomials by requiring   $B(0),\, B(1)\in\{0,1\}$. 
\begin{mydef}
A polynomial $B(x)\in\mathbb{\overline{Q}}[x]$ is said to be a 
normalized Belyi polynomial or Belyi polynomial if $B(0),\,B(1)\in\{0,1\}$ and $\{B(x_{i})\,:\, B'(x_{i})=0\}\subset\{0,1\}$.
\end{mydef} 
Equivalently we note that $B(x)$ is a Belyi polynomial if 
$$\begin{array}{l}
B(0),\, B(1)\in\{0,1\},\text{ and }\\
B'(x)\mid B(x)(1-B(x)).
\end{array}$$
We call these the two Belyi conditions.
With these conditions, a Belyi polynomial composed with a linear factor $(\gamma x-\alpha)$ is a Belyi polynomial if and only if $B(\gamma),B(\gamma-\alpha)\in\{0,1\}$.
For a fixed Belyi polynomial there exist finitely many linear factors we may compose with and yield a  Belyi polynomial.
This finiteness condition is essential to define our Belyi height with the property that there exist finitely many Belyi polynomials of a given degree.

\begin{example}\label{basic}
The simplest examples of  Belyi polynomials are $f(x)=x^n$, $f(x)=1-x$, and
$$B_{a,b}(x)=b^{b}a^{-a}(b-a)^{-(b-a)} x^{a}(1-x)^{b-a},\text{ where }a,b\in\mathbb{N}, { and } (b-a)\geq 0.$$
\end{example}
The Belyi polynomial $B_{a,b}(x)$ maps $\{\frac{a}{b},0,1\}$ to $\{0,1\}$. 
When we compose $B_{a,b}(x)$ with certain polynomials  $C(x)$ the result, $B_{a,b}(C(x))$, as fewer critical values than $C(x)$.
Specifically, when $C(x)$ satisfies the first Belyi condition and has a critical value of $\frac{a}{b}$, composing with $B_{a,b}$ reduces the number of critical values.

\begin{example} 
The Chebyshev polynomials of the first kind, $T_{n}(x),\, n\geq 1$,
$$\begin{array}{l}
   T_{0}(x)=1,\,\,\, T_{1}(x)=x,\,\,\, T_{n+1}(x)=2x\cdot T_{n}(x)-T_{n-1}(x)
\end{array}$$
have critical values contained in $\{-1,1\}$ and $T_n(1),T_n(-1)\in\{-1,1\}$. 
Therefore $\frac{1}{2}(T_{n}(x)+1)$ are general Belyi polynomials and $\frac{1}{2}(T_n(2x-1))+1)$ are Belyi polynomials.
\end{example}

This example is studied in detail in \cite{Bau06} where the normalization of Belyi polynomials is done with respect to $\{-1,1\}$ instead of $\{0,1\}$.

\begin{example}
 The composition of any two Belyi polynomials is a Belyi polynomial.
\end{example}

This example is a simple application of the chain rule and gives the set of Belyi polynomials a monoid structure under composition with identity, $x$.
This structure has been used to study the absolute Galois group in number theory \cite{wood06}, \cite{Ellen02} and  dynamical systems \cite{Pilgr00}.

Belyi polynomials belong to the larger set of Belyi functions.
A Belyi function $f$ maps a Riemann surface $X$ to the Riemann sphere $P^1\mathbb{C}$ with critical values contained in $\{0,1,\infty\}$.
Grothendieck was drawn into this subject because of Belyi's theorem \cite{Belyi83}, which states a Riemann surface $X$ is defined over $\overline{\mathbb{Q}}$ if and only if there exist a Belyi function mapping $X$ to $P^1\mathbb{C}$. 
This marked the beginning of his program on dessin d'enfants \cite{Schne94}, which is directly related to Belyi functions due to the well-known categorical equivalence between the two.

In the case where $X=P^1\mathbb{C}$ we normalize Belyi functions by requiring the set $\{0,1,\infty\}$ be mapped to $\{0,1,\infty\}$. 
As a corollary \cite{Bau06} of  the Riemann Existence Theorem \cite{Miran95} there exist finitely many normalized Belyi functions that map $P^1\mathbb{C}$ to $P^1\mathbb{C}$ of degree at most $n$, where degree is the cardinality of the pre-image of a point in 
$P^1\mathbb{C}\setminus\{0,1,\infty\}$.
This means there are finitely many normalized Belyi polynomials of a given degree, hence finitely many algebraic numbers mapped to zero or one by normalized Belyi polynomials of degree $d$.
The question we address in this paper is the following:  
for fixed $\lambda\in\overline{\mathbb{Q}}$, what is the minimal degree of normalized Belyi polynomials that map $\lambda$ to zero or one? 
We call this minimum the $\emph{Belyi\,height}$ of a number and denote it as $\mathcal{H}(\lambda)$. 
In \cite{Khadj02}, an upper bound of $\mathcal{H}(\lambda)$ is given, in addition to bounds for the case when  $X$  is an elliptic curve.
In this paper we will provide a sharp lower bound on the degree.
Our results follow directly from \cite{Zappo08} and \cite{Beckm89}.
As in Beckman's paper our result says bad reduction implies wild ramification. 
What this paper contributes is a proof which uses elementary combinatorial techniques and Newton polygons.
We will prove that Belyi polynomials with degree less than $p$ and $B(0)=0$  have Newton polygons with respect to $p$ (for the remainder of the paper all Newton polygons will be with respect to $p$)  contained in the Newton polygon of $B(x)-1$ [Theorem $\ref{containment}$].
We then prove the Newton polygon of $B(x)-1$ is contained in a single line segment [Theorem $\ref{line}$].
Using a classical lemma [Lemma $\ref{factorization}$] relating the Newton polygon of a polynomial to the $p$-adic valuation of its roots we prove:

\begin{result*}[Theorem \ref{result}]
The Belyi height of $\lambda$, $\mathcal{H}(\lambda)$, is greater than or equal to $p$ for $\lambda\neq 0$ in $\overline{\mathbb{Q}}$ with non-zero $p$-adic valuation.
\end{result*}

We remark that it is nontrivial to show that such a height is well defined, that is, for all algebraic numbers over $\mathbb{Q}$ there exists a Belyi polynomial, which maps it to either zero or one. 
Given $\lambda\in\overline{\mathbb{Q}}$ Belyi provided a way to construct  \cite{Schne94} a Belyi function, which maps $\{0,1,\lambda,\infty\}$ to $\{0,1,\infty\}$ by first constructing a polynomial  $g_\lambda(x)\in\mathbb{Q}[x]$ having rational critical values, $g_\lambda(\lambda)\in\mathbb{Q}$, and $\{0,1\}$  mapped to $\{0,1\}$. 
We compose $g_\lambda(x)$  with a linear factor $l_1(x)$, preserving the number of critical values, so that $l_1\circ g_\lambda (x)$ has a rational critical value $\frac{a_1}{b_1}$ between zero and one.
We compose $l_1(x)\circ g_\lambda(x)$ with   $B_{a_1,b_1}(x)$ so $B_{a_1,b_1}\circ l_1 \circ g_\lambda (x)$ has fewer critical values than $g_\lambda (x)$ as mentioned in Example \ref{basic}. 
Repeating this finitely many times yields a Belyi polynomial  $B_{a_k,b_k}\circ l_k \circ \cdots\circ B_{a_1,b_1}\circ l_1 \circ g_\lambda (x)$ that  maps $\lambda$ to a rational number $\frac{a_{k+1}}{b_{k+1}}$. 
We do a final iteration so that $\lambda$ is mapped to zero or one. While this algorithm  gives us a way of constructing Belyi polynomials it does not provide us a way of constructing all of them. 

\section{Newton Polygon Factorization}
We begin this section with an introduction to $p$-adic numbers, Newton polygons, and convex sets to state Lemma $\ref{factorization}$, which allows us to classify the roots of a polynomial using these objects (see \cite{Gouve97}, \cite{Weiss98}, \cite{Engle05} for a thorough introduction).
The $p$-adic metric on $\mathbb{Q}$ is defined as:
$$\left|\:\right|_{p}:
\begin{array}{ccc}
\mathbb{Q} & \rightarrow & \mathbb{R}\\
p^{k}\frac{a}{b} & \mapsto & p^{-k}
\end{array} $$
where $p\nmid ab\neq0$ and $\left|0\right|_{p}\equiv0$. 
The completion of $\mathbb{Q}$ under this metric will be denoted as $\mathbb{Q}_{p}$.
The algebraic closure of $\mathbb{Q}_p$ is denoted as $\overline{\mathbb{Q}}_p$  and has a $p$-adic absolute value.
Thus it makes sense to talk of the $p$-adic absolute value of any algebraic number over $\mathbb{Q}$.
Frequently, it will be easier to state results using the $p$-adic valuation 
$$\nu_{p}:\begin{array}{ccc}
\overline{\mathbb{Q}}_p & \rightarrow & \mathbb{R}\cup\infty\\
\lambda & \mapsto & -\log\left|\lambda\right|_{p}\end{array}$$
where $\nu_{p}(0)\equiv\infty$. 
The $p$-adic valuation has properties induced by the $p$-adic metric:

$$\begin{array}{l}
(1)\,\nu_{p}(\lambda_{1}\lambda_{2})=\nu_{p}(\lambda_{1})+\nu_{p}(\lambda_{2})\\
(2)\,\nu_{p}(\lambda)=\infty\, \text{ if and only if }\,\lambda=0\\
(3)\,\nu_{p}(\lambda_{1}+\lambda_{2})\geq\min\left\{ \nu_{p}(\lambda_{1}),\nu_{p}(\lambda_{2})\right\}.
\end{array}$$
The last property is induced because the $p$-adic metric is non-Archimedian meaning $$\left|a+b\right|_{p}\leq\max\{\left|a\right|_{p},\left|b\right|_{p}\}.$$

Define the valuation ring with respect to $p$ as $\mathcal{O}_{p}=\{\lambda\in\overline{\mathbb{Q}}_{p}:\,\nu_{P}(\lambda)\geq0\}$, the elements of the field $\overline{\mathbb{Q}}_{p}$ with non-negative valuation.
This ring has the maximal ideal $\mathfrak{m}_{p}=\{\lambda\in\overline{\mathbb{Q}}_{p}:\,\nu_{P}(\lambda)>0\}$, the elements of $\overline{\mathbb{Q}}_{p}$ with positive valuation. 
We denote the reduction map as $\pi:\mathcal{O}_{p}[x]\to\mathbb{F}[x] \text{ where } \mathbb{F}$ is the field $\mathcal{O}_{p}/\mathfrak{m_{p}}$.

The convex hull of a set of points is the intersection of all convex sets containing the points.
When we find the convex hull of finitely many points $\{(x_0,y_0),...,(x_n,y_n)\}\subset\mathbb{R}^2$, the result is a  point, line segment, or convex polygon described algebraically as
$$\{ (\sum_{i=0}^n c_i x_i,\sum_{i=0}^n c_i y_i)\in\mathbb{R}^2:\sum_{i=0}^n c_i=1\}$$
where $c_i\geq 0$ for all $i$. 
Given a polynomial $f(x)=a_{0}+a_{1}x+\cdots+a_{n}x^{n}$ over $\overline{\mathbb{Q}}$, then $\Conv_p(f)$ denotes the convex hull of $$\{\left(i,\nu_{p}(a_{i})\right)\in\mathbb{R}^{2}\,:\, a_{i}\neq0\}.$$
Our notation will be that  $[v_{i},v_{j}]$ denotes the line segment connecting the points $v_{i}$ and $v_{j}$. 
By convention $[v_{i},v_{i}]$ denotes the point $v_{i}$.
When $\Conv_p(f)$ is a polygon we label a subset of the polygon's vertices counter-clockwise from the left-most, $v_{0}$, ending at the right-most, $v_{m}$. 
The lower boundary of $\Conv_p(f)$ is the union of the $m$ line segments connecting $v_{i-1}$ to $v_{i}$ denoted as 
$$\bigcup_{i=1}^{m}[v_{i-1},v_{i}].$$ 
When $\Conv_p(f)$ is a line segment or point,  the lower boundary of $\Conv_p(f)$ is $[v_{0},v_{1}]$ or $[v_{0},v_{0}]$ respectively. 

\begin{mydef} 
The Newton polygon of a polynomial $f(x)\in\overline{\mathbb{Q}}[x]$ with respect to $p$, is the lower boundary of $\Conv_p(f)$:
 $$\New_{p}(f(x))=\begin{cases}
\begin{array}{l}
[v_{0},v_{0}]\\{}
[v_{0},v_{1}]\\
\bigcup_{i=1}^{m}[v_{i-1},v_{i}]\end{array} & \begin{array}{l}
\text{if }\Conv_{p}(f)\,\text{is\, the\, point\, }v_{0}\\
\text{if }\Conv_{p}(f)\,\text{is a line segment }\,[v_{0},v_{1}]\\
\text{if }\Conv_{p}(f)\text{ \text{is a polygon with lower boundary }}\bigcup_{i=1}^{m}[v_{i-1},v_{i}]\end{array}\end{cases}$$
\end{mydef}

The Newton polygon of a polynomial is a single vertex precisely when $f(x)$ is a monomial. When $f(0)=0$ the Newton polygons of $f(x)$ and $f(x)-1$ are closely related.

\begin{lemma}\label{newB-1}
Suppose $f(0)=0$ and $\New_p(f(x))=[v_{0},v_{1}]\cup...\cup[v_{m-1},v_{m}]$.

$(1)$ $\New_p(f(x)-1)=[v_{-1},v_{j}]\cup[v_{j},v_{j+1}]\cup...\cup[v_{m-1},v_{m}]$ for some $j$, $0\leq j \leq m$, and $v_{-1}$ denotes the origin.

$(2)$ Let $s_i$ denote the slope of $[v_{i-1},v_i]$ and $s_0$ denote the slope of $[v_{-1},v_j]$. Then $$s_1<...<s_j < s_0\leq s_{j+1}<...<s_m.$$

$(3)$ If the degree of $f(x)$ is less than $p$, then $\New_p(f(x))=\New_p(x\cdot f'(x))$.

\end{lemma}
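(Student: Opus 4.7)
The plan is to track how the Newton polygon changes when we pass from $f(x)$ to $f(x)-1$ and from $f(x)$ to $xf'(x)$, exploiting that each operation affects only a few plotted points. For part $(1)$, I would observe that $f(x)$ and $f(x)-1$ differ only in the constant term: since $f(0)=0$, there is no plotted point at abscissa $0$ for $f(x)$, whereas $f(x)-1$ contributes the new point $v_{-1}=(0,\nu_p(-1))=(0,0)$. Hence the plotted set for $f(x)-1$ is that of $f(x)$ together with $v_{-1}$. Adding a point to the left of the existing set cannot disturb the lower boundary to the right of whichever vertex becomes the first "meeting point" with the original polygon, so I would define $j$ to be the smallest index minimizing $y_i/x_i$, where $v_i=(x_i,y_i)$. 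Geometrically this is the vertex at which the supporting line through the origin first touches $\New_p(f)$; to its right the polygon is unchanged, while to its left the vertices $v_0,\ldots,v_{j-1}$ lie strictly above the new edge $[v_{-1},v_j]$.

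For part $(2)$, the strict monotonicity $s_{j+1}<\cdots<s_m$ is inherited from the original Newton polygon. To locate the new slope $s_0=y_j/x_j$ in this chain, I would use the identity $y_j=y_{j-1}+s_j(x_j-x_{j-1})$ to express $s_0$ as a convex combination of $y_{j-1}/x_{j-1}$ and $s_j$. Minimality of $j$ forces $y_{j-1}/x_{j-1}>s_0$, which immediately yields $s_j<s_0$. A parallel computation from $y_{j+1}=y_j+s_{j+1}(x_{j+1}-x_j)$ writes $y_{j+1}/x_{j+1}$ as a convex combination of $s_0$ and $s_{j+1}$; since $y_{j+1}/x_{j+1}\geq s_0$ by the choice of $j$, we obtain $s_0\leq s_{j+1}$, with equality corresponding to the case where $v_j$ lies on the straight segment from the origin through $v_{j+1}$.

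For part $(3)$, writing $f(x)=\sum_{i\geq 1}a_i x^i$, the coefficient of $x^i$ in $xf'(x)$ is $ia_i$ with valuation $\nu_p(i)+\nu_p(a_i)$. If $\deg f<p$, then $p\nmid i$ for every $1\leq i\leq \deg f$, so $\nu_p(i)=0$ and the plotted sets of $f(x)$ and $xf'(x)$ coincide. Equal plotted sets give equal convex hulls, hence equal Newton polygons.

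The most delicate point is the selection of $j$ in parts $(1)$ and $(2)$. Multiple plotted points may be collinear with the origin, so $j$ must be chosen minimal among the minimizers of $y_i/x_i$ in order to force the strict inequality $s_j<s_0$; the weak inequality $s_0\leq s_{j+1}$ then correctly accommodates the analogous collinearity on the right. Once this bookkeeping is fixed, the remaining verifications are routine convex-geometry calculations with the ratios $y_i/x_i$ and the slopes $s_i$.
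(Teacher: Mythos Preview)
Your argument is correct. The paper itself offers essentially no proof of this lemma beyond the single sentence ``This follows directly from properties of convex sets and the definition of Newton polygon,'' so your write-up is a faithful and more explicit unpacking of exactly that claim: the plotted set of $f(x)-1$ is that of $f(x)$ with the origin adjoined, the tangent line from the origin to the lower hull picks out $v_j$, and for part~(3) the valuations $\nu_p(i a_i)=\nu_p(a_i)$ when $i<p$. Your care in choosing $j$ minimal among the minimizers of $y_i/x_i$, so that the strict inequality $s_j<s_0$ holds while $s_0\le s_{j+1}$ may be weak, is exactly the right bookkeeping and matches how the lemma is later used in the paper. There is no genuinely different route here; you have simply supplied the details the paper omits.
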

\begin{proof}

This follows directly from properties of convex sets and the definition of Newton polygon. 
\end{proof}

When $f(0)=0$ and $v_j$ denotes the left-most point of $\New_p(f(x))\cap\New_p(f(x)-1)$, combinatorially, the first two parts of the lemma say:
the points in $\New_p(f(x))\cup\New_p(f(x)-1)$ to right of $v_j$ are in $\New_p(f(x))\cap\New_p(f(x)-1)$; 
the $\New_p(f(x)-1)$ has only one segment, $[v_{-1},v_j]$, to the left of $v_j$; 
and  the slope of $[v_{-1},v_j]$ is bounded by the slopes of line segments of $\New_p(f(x))$.
The third part combinatorial means that the $New_p(f'(x)$ is $New_p(f(x))$   but shifted to left one unit.

We will prove Theorem \ref{containment} and Theorem \ref{line} by taking full  advantage of the following classical lemma:

\begin{lemma}\label{factorization}
 Let $f(x)$ be a polynomial over $\overline{\mathbb{Q}}$ such that $\New_{p}(f(x))=[v_{0},v_{1}]\cup...\cup[v_{m-1},v_{m}]$. 
Let $s_{i}$ equal the slope of $[v_{i-1},v_{i}]$, and  $d_{i}$ equal the length of the projection of $[v_{i-1},v_{i}]$ to the $x$-axis.
Then the polynomial $f(x)$ may be written as
$$f(x)=a_{n}x^{d_{0}}f_{1}(x)\cdots f_{m}(x)$$ 
where $f_{i}(x)$ is monic with $d_{i}$ roots of valuation 
$-s_{i}$ counting multiplicity.
\end{lemma}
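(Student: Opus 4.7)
The plan is to factor $f(x)$ over $\overline{\mathbb{Q}}_p$, group its non-zero roots by $p$-adic valuation, and show that this grouping reproduces the given Newton polygon piece by piece.

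First, I would write $f(x) = a_n\, x^{d_0} \prod_{j}(x - \alpha_j)$, where the $\alpha_j$ are the non-zero roots and $d_0$ is the multiplicity of $0$ as a root (which matches the $x$-coordinate of $v_0$). Let $\beta_1 > \beta_2 > \cdots > \beta_r$ be the distinct $p$-adic valuations among these non-zero roots, and let $g_k(x)$ be the monic polynomial whose roots are precisely those $\alpha_j$ with $\nu_p(\alpha_j) = \beta_k$, of degree $e_k$. An elementary-symmetric-polynomial estimate using the ultrametric inequality shows that the $i$-th coefficient of $g_k$ has valuation at least $(e_k - i)\beta_k$, with equality at $i = 0$ and $i = e_k$, so $\New_p(g_k)$ is a single line segment from $(0, e_k\beta_k)$ to $(e_k, 0)$ of slope $-\beta_k$.

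The crucial step is the claim that the Newton polygon of a product is obtained by concatenating the segments of the Newton polygons of the factors in order of increasing slope. I would prove this inductively from the two-factor case: writing the $n$-th coefficient of a product as $\sum_{i+j=n} a_i b_j$, the ultrametric inequality yields the lower bound $\min_{i+j=n}\bigl(\nu_p(a_i)+\nu_p(b_j)\bigr)$ on its valuation, and a vertex-by-vertex argument shows that at each vertex of the Minkowski-sum lower hull the minimum is attained by a unique pair $(i,j)$, so the bound is sharp at those points. This uniqueness check is the main obstacle, since away from vertices cancellation can lift the valuation above the predicted value; showing that the resulting Newton polygon nevertheless passes through exactly the Minkowski-sum vertices is where the convex-geometric bookkeeping is most delicate.

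Applied to $f(x) = a_n\, x^{d_0}\, g_1(x) \cdots g_r(x)$, this produces a Newton polygon with slopes $-\beta_1, \ldots, -\beta_r$ (in increasing order) and horizontal segment lengths $e_1, \ldots, e_r$, shifted right by $d_0$ due to the factor $x^{d_0}$. Comparing with the hypothesis $\New_p(f) = [v_0,v_1]\cup\cdots\cup[v_{m-1},v_m]$ and using that a Newton polygon's slopes are strictly increasing forces $r = m$, $-\beta_i = s_i$, and $e_i = d_i$. Setting $f_i(x) := g_i(x)$ then gives the asserted factorization, completing the proof.
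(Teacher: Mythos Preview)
Your proposal is correct and follows the standard textbook argument for this classical lemma: factor completely, group roots by valuation, compute the Newton polygon of each single-valuation factor, and invoke the fact that the Newton polygon of a product is the slope-ordered concatenation (Minkowski sum) of the factors' Newton polygons. The paper does not give its own proof of this lemma at all---it simply cites \cite{Weiss98}, p.~74---so there is nothing to compare against beyond noting that your sketch is precisely the kind of argument one finds in such references. The one place you should be slightly more careful is the uniqueness-at-vertices step in the concatenation claim, which you correctly flag as the delicate point; once that is nailed down, the rest is bookkeeping.
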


\begin{proof} 
 We refer to \cite{Weiss98}, p.74.
\end{proof}

We call this factorization of $f(x)$ its $\emph{Newton polygon factorization}$ with respect to $p$.


\begin{example}
 Setting $p$  equal to five, the Newton polygons of three polynomials, $h_1(x),h_2(x),h_3(x)$, are shown in bold.  The thin line segment is the left-most line segment of $\New_p(h_i (x)-1)$.

The left is an example where $\New(f(x))\not\subset\New(f(x)-1)$.  The center is an example where $\New(f(x)-1)$  is not contained in a line segment. The right is an example of a Newton polygon of a Belyi polynomial.

\center{\includegraphics[scale=0.8]{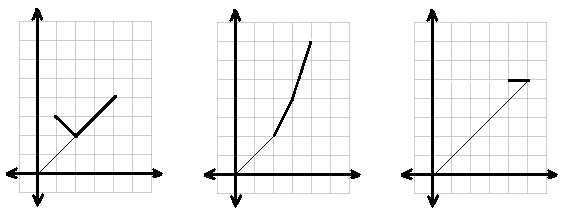}}


$$\begin{array}{lll}h_{1}=5^{4}\cdot x\left(x-\frac{1}{5}\right)^{2}\left(x-5\right), & h_{2}=5^{7}x^{2}\left(x-\frac{1}{5^{2}}\right)\left(x-\frac{1}{5^{3}}\right), & h_{3}=\frac{5^{5}}{4^{4}}\cdot x^{4}\left(1-x\right),
\\h_{1}=5^{4}x^{4}-3^{3}\cdot5^{3}x^{3}+51\cdot5^{2}x^{2}-5^{3}x, & h_{2}=5^{7}x^{4}-6\cdot5^{4}\cdot x^{3}+5^{2}x^2, & h_{3}=-\frac{5^{5}}{4^{4}}x^{5}+\frac{5^{5}}{4^{4}}x^{4}\end{array}$$

\end{example}

\section{Newton Polygons of Belyi Polynomials}
We prove in the case where $B(x)$ is a  Belyi polynomial of degree less than $p$  with zero as a root that $\New_p(B(x))\subset\New_{p}(B(x)-1)$. We then prove $\New_p(B(x)-1)$  is contained in a single line segment. 
Using these two results we are able to give a lower bound on the Belyi height. 
\begin{theorem}\label{containment}
  If $B(x)\in\overline{\mathbb{Q}}[x]$ is a Belyi polynomial of degree less than $p$ such that $B(0)=0$, then $\New_{p}(B(x))\subset \New_{p}(B(x)-1)$.
\end{theorem}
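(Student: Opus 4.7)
The plan is to apply Lemma \ref{newB-1} to reduce the desired containment to a single combinatorial parameter $j$, and then to derive a contradiction when $j \geq 1$ by comparing root counts of $B'(x)$ at a specific valuation using the Belyi divisibility condition.

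Writing $\New_p(B(x)) = [v_0, v_1] \cup \cdots \cup [v_{m-1}, v_m]$ with $s_i, d_i$ the slope and horizontal projection of $[v_{i-1}, v_i]$, Lemma \ref{newB-1}(1) yields $\New_p(B(x) - 1) = [v_{-1}, v_j] \cup [v_j, v_{j+1}] \cup \cdots \cup [v_{m-1}, v_m]$ for some $0 \leq j \leq m$, where $v_{-1}$ denotes the origin. The desired containment $\New_p(B(x)) \subset \New_p(B(x) - 1)$ is equivalent to $j = 0$, so I reformulate the goal as: assume $j \geq 1$ and derive a contradiction.

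Focus on the valuation $-s_j$. By Lemma \ref{factorization} applied to $B(x)$, there are distinct roots $\alpha_1, \ldots, \alpha_l$ of $B$ of valuation $-s_j$ with multiplicities $a_1, \ldots, a_l$ summing to $d_j \geq 1$, so $l \geq 1$. By Lemma \ref{newB-1}(2), $s_j < s_0 \leq s_{j+1}$, hence $s_j$ is not a slope of $\New_p(B(x) - 1)$, so $B(x) - 1$ has no root of valuation $-s_j$. On the other hand, Lemma \ref{newB-1}(3) (the only place where the hypothesis $\deg B < p$ is used) together with Lemma \ref{factorization} applied to $B'(x)$ shows that $B'(x)$ has exactly $d_j$ roots of valuation $-s_j$, since $\New_p(B'(x))$ is $\New_p(B(x))$ shifted left by one unit, preserving the slope $s_j$ and the projection $d_j$.

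The Belyi condition $B'(x) \mid B(x)(1 - B(x))$ forces every root of $B'(x)$ to be a root of $B(x)$ or $B(x) - 1$, so every root of $B'(x)$ of valuation $-s_j$ must lie in $\{\alpha_1, \ldots, \alpha_l\}$. A standard characteristic-zero computation (write $B(x) = (x - \alpha_k)^{a_k} h(x)$ with $h(\alpha_k) \neq 0$ and differentiate) shows each $\alpha_k$ is a root of $B'(x)$ of multiplicity exactly $a_k - 1$, giving total multiplicity $\sum_k (a_k - 1) = d_j - l$. Setting this equal to $d_j$ forces $l = 0$, contradicting $l \geq 1$. The main obstacle is the bookkeeping: one must track slopes and multiplicities simultaneously across the three Newton polygons $\New_p(B)$, $\New_p(B - 1)$, and $\New_p(B')$, and recognize that the slopes $s_1, \ldots, s_j$ are exactly those where $B$ has roots but $B - 1$ does not, so the $-1$ multiplicity defect from each $\alpha_k$ produces the needed contradiction.
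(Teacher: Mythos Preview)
Your proof is correct and follows essentially the same strategy as the paper: reduce via Lemma~\ref{newB-1} to showing $j=0$, then combine the Belyi divisibility condition with $\New_p(B)=\New_p(xB')$ and Lemma~\ref{factorization} to force a contradiction at a slope $s_i$ ($i\le j$) where $B-1$ has no roots. The paper works at slope $s_1$, shows the monic Newton factors satisfy $f_1=h_1$, and then applies the product rule to obtain $f_1\mid f_1'$; your direct multiplicity count ($d_j$ versus $d_j-l$) at slope $s_j$ reaches the same contradiction slightly more cleanly.
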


\begin{proof} 
Let $B(x)=\sum a_k x^k$. 
If $B(x)$ is a monomial the result is trivial so we consider the case where $\New_{p}(B(x))=[v_{0},v_{1}]\cup...\cup[v_{m-1},v_{m}]$, $m>0$. 
Using the same notation as Lemma  $\ref{newB-1}$, we may assume $\New_{p}(B(x)-1)=[v_{-1},v_{j}]\cup[v_{j},v_{j+1}]\cup...\cup[v_{m-1},v_{m}]$ and 
$$s_1<...<s_j < s_0\leq s_{j+1}<...<s_m.$$
Our goal is to show $v_{j}=v_{0}$ and the result follows. Lemma  $\ref{factorization}$ allows us to write 
$$\begin{array}{l}
B(x)=a_{n}x^{d_{0}}f_{1}(x)\cdots f_{m}(x),\\
B(x)-1=a_{n}g_{0}(x)g_{j+1}(x)\cdots g_{m}(x)
\end{array}$$
where every root of the monic polynomials $f_i$  and $g_i$, $i\neq0$, has valuation $-s_i$.
In addition $\deg(g_{i})=\deg(f_{i})$ when $i>j$, while each root of $g_{0}$ has valuation $-s_{0}$.

Since $\deg(B(x))<p$ and $B(0)=0$ then $\nu_{p}(a_{i})=\nu_{p}(i\cdot a_{i})$ for every $a_{i}\neq0$.
Therefore $\Conv_p(B(x))=\Conv_p (xB'(x))$ and $\New_{p}(B(x))=\New_{p}(xB'(x))$.
Hence $B'(x)$ may be written as 
$$B'(x)=\deg(B)a_{n}x^{d_{0}-1}h_{1}(x)\cdots h_{m}(x)$$
where $\deg(h_{i})=\deg(f_{i})$ and $h_{i}$ is monic with roots of valuation $-s_{i}$. 
By the Belyi conditions, $$\deg(B)a_{n}x^{d_{0}-1}h_{1}(x)\cdots h_{m}(x)\mid a_{n}x^{d_{0}}f_{1}(x)\cdots f_{m}(x)\cdot a_{n}g_{0}(x)g_{j+1}(x)\cdots g_{m}(x).$$

Therefore $h_{i}(x)\mid f_{i}(x)$ when $i\leq j$. 
Since the degrees of the monic polynomials are also equal it follows $f_{1}=h_{1}$ when $j\geq 1$. Taking the derivative of $B(x)$ and substituting $f_1(x)$ for $h_1(x)$ we have  
$$\deg(B)a_{n}x^{d_{0}-1}f_{1}(x)h_{2}(x)\cdots h_{m}(x)=f_{1}'(x)(a_{n}x^{d_{0}}f_{2}(x)\cdots f_{m}(x))+f_{1}(x)(a_{n}x^{d_{0}}f_{2}(x)\cdots f_{m}(x))'$$
and so
$$f_{1}(x)\mid f_{1}'(x)(a_{n}x^{d_{0}}f_{2}(x)\cdots f_{m}(x)).$$ Because $f_1(x)$ and $f_i(x)$ share no common roots when $i\neq 1$, $f_{1}(x)\mid f_{1}'(x)$, yielding a  contradiction when $j\geq 1$. Hence $v_0=v_j$.
\end{proof}

Next we show that if $B(x)$ is a  Belyi polynomial such that $B(0)=0$, then $\New(B(x)-1)$ must be a line segment, and in preparation prove two lemmas. 

\begin{lemma}\label{derivaivedivides0} 
 Suppose $f(x)$ is a nonzero polynomial over an algebraically closed field of characteristic zero. If $f'(x)$ divides $f(x)^{2}$ and $f(0)=0$ then $f(x)=a_{n}x^{d}$. 
\end{lemma}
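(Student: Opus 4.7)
The plan is to work over the algebraic closure and exploit the fact that in characteristic zero, the multiplicity of a root of $f$ as a root of $f'$ drops by exactly one. Write
\[
f(x) = a_n \prod_{i=1}^{k} (x-\alpha_i)^{e_i}
\]
with the $\alpha_i$ pairwise distinct and each $e_i \geq 1$, so $\deg f = \sum e_i = d$. Using logarithmic differentiation, factor out the maximal power of $\prod_i (x-\alpha_i)$ that divides $f'$:
\[
f'(x) = a_n\Bigl(\prod_{i=1}^{k}(x-\alpha_i)^{e_i-1}\Bigr)\cdot q(x), \qquad q(x) = \sum_{i=1}^{k} e_i\prod_{j\neq i}(x-\alpha_j).
\]

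Next I would verify two facts about $q(x)$ that use characteristic zero in an essential way. First, the leading coefficient of $q$ is $\sum_i e_i = d \neq 0$, so $\deg q = k-1$ exactly. Second, evaluating at any $\alpha_i$ gives $q(\alpha_i) = e_i\prod_{j\neq i}(\alpha_i - \alpha_j) \neq 0$, since the $\alpha_i$ are distinct and $e_i \neq 0$. Consequently $\gcd\bigl(q(x),\,\prod_i(x-\alpha_i)\bigr) = 1$.

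Now I would invoke the hypothesis $f'(x)\mid f(x)^2$. Cancelling the common factor $\prod_i(x-\alpha_i)^{e_i-1}$ from the divisibility
\[
a_n\prod_i (x-\alpha_i)^{e_i - 1}\,q(x)\;\Bigm|\;a_n^{2}\prod_i (x-\alpha_i)^{2e_i}
\]
yields $q(x)\mid a_n\prod_i(x-\alpha_i)^{e_i+1}$. Since $q$ is coprime to $\prod_i(x-\alpha_i)$, this forces $q$ to be constant, so $k - 1 = \deg q = 0$, i.e.\ $k = 1$. Combined with $f(0) = 0$, the unique root must be $0$, giving $f(x) = a_n x^{d}$ as desired.

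The computation is essentially mechanical; the only subtle point is ensuring that both $\deg q = k-1$ and $q(\alpha_i) \neq 0$, which is exactly where characteristic zero is used. In positive characteristic these could fail (e.g.\ $f(x) = x^p$ has $f' = 0$), so the hypothesis on the field is what makes the argument go through.
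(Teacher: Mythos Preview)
Your proof is correct and follows essentially the same route as the paper: factor $f$ over the algebraically closed field, write $f' = a_n\prod_i(x-\alpha_i)^{e_i-1}q(x)$ with $q(x)=\sum_i e_i\prod_{j\neq i}(x-\alpha_j)$, observe that in characteristic zero $q$ has degree $k-1$ and is coprime to $\prod_i(x-\alpha_i)$, and conclude from $f'\mid f^2$ that $q$ is constant, forcing $k=1$. The only cosmetic difference is that you spell out the cancellation and coprimality step a bit more explicitly than the paper does.
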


\begin{proof}
 Suppose $f(x)=a_{n}\prod_{i=1}^{m}(x-\alpha_{i})^{d_{i}}$ where $\alpha_{i}$ are distinct. 
Then 
$$f'(x)=a_{n}\prod_{i=1}^{m}(x-\alpha_{i})^{d_{i}-1}g(x),\text{ where } g(x)=\sum_{i=1}^{m}d_{i}(x-\alpha_1)(x-\alpha_2)\cdots \widehat{(x-\alpha_i)}\cdots (x-\alpha_m)$$ 
and $\widehat{(x-\alpha_i)}$ denotes omitting a term.
Note that  $\deg(g(x))=m-1$ and the coefficient of the leading term is $\deg(f(x))$. 
For each root $\alpha_{j}$ of $f(x)$, $g(\alpha_{j})\neq 0$. 
But $g(x)$ also divides $f(x)^{2}$ so $g(x)$ must have degree zero and $m=1$. 
\end{proof}
The same proof holds in characteristic $p$ if every $d_{i}$ is not divisible by $p$ and $p\nmid\deg(f(x))$, giving us:
\begin{corollary}\label{derivativedividesall}
Suppose $f(x)$ is a nonzero polynomial over an algebraically closed field of arbitrary characteristic. 
If $f'(x)$ divides $f(x)^{2}$ and $f(0)=0$ then $f(x)=a_nx^{d}$ or $\deg(f(x))\geq p$. 
\end{corollary}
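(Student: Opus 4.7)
The plan is to piggyback on the proof of Lemma \ref{derivaivedivides0} and exploit the author's own observation that its argument goes through in characteristic $p$ as long as no multiplicity $d_i$ and the total degree $\deg f$ vanish in the field. Accordingly, I would split into two cases: if $\deg f(x) \geq p$ the conclusion holds trivially, so the whole content is to show that $\deg f(x) < p$ forces $f(x) = a_n x^d$.

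Assuming $\deg f(x) < p$, I would first note the numerical input: every multiplicity satisfies $d_i \leq \deg f(x) < p$, so each $d_i$ is a nonzero element of the base field, and likewise $\deg f(x)$ itself is nonzero in the field. With these in hand, I replay the Lemma \ref{derivaivedivides0} proof verbatim. Writing $f(x) = a_n \prod_{i=1}^m (x-\alpha_i)^{d_i}$ with the $\alpha_i$ distinct, differentiation yields
$$f'(x) = a_n \prod_{i=1}^m (x-\alpha_i)^{d_i - 1} g(x), \qquad g(x) = \sum_{i=1}^m d_i \prod_{j \neq i}(x-\alpha_j).$$
Because $\deg f(x)$ does not vanish in the field, $g(x)$ still has degree exactly $m - 1$ with leading coefficient $\deg f(x)$; and because each $d_j$ is nonzero, $g(\alpha_j) = d_j \prod_{k \neq j}(\alpha_j - \alpha_k) \neq 0$ for every root $\alpha_j$ of $f(x)$.

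To finish, I would use $f'(x) \mid f(x)^2$ and the displayed factorization to conclude $g(x) \mid f(x)^2$. Since $g$ shares no root with $f$, it must be a nonzero constant, hence $m - 1 = 0$. Combined with the hypothesis $f(0) = 0$, which forces the unique root to be $\alpha_1 = 0$, this gives $f(x) = a_n x^{d}$ as required. I do not expect a genuine obstacle in the argument: the only delicate point is keeping $g(x)$ from degenerating in positive characteristic (either by drop of degree or by a vanishing value at some $\alpha_j$), and the bound $\deg f(x) < p$ rules out both failure modes simultaneously, which is exactly why the author's remark says these two conditions suffice.
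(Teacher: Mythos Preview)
Your proposal is correct and follows exactly the paper's own approach: the paper's entire ``proof'' is the one-line remark preceding the corollary that the argument of Lemma~\ref{derivaivedivides0} goes through in characteristic $p$ provided no $d_i$ and $\deg f$ are divisible by $p$, and you have simply spelled out why $\deg f < p$ guarantees both conditions and then replayed that argument. There is nothing to add or correct.
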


\begin{lemma}\label{upper}
Given $f(x)\in\overline{\mathbb{Q}}$ of degree $n$, $f(0)=0$, 
$\New_{p}(f(x))=[v_{0},v_{1}]\cup...\cup [v_{m-1},v_{m}]$, and $m>0$, then there exists $\gamma$ such that $R(x)=\frac{1}{a_{n}\gamma^{n}}f(\gamma x)$ with $\New_{p}(R(x))=[w_{0},w_{1}]\cup...\cup[w_{m-1},w_{m}]$ has $[w_{m-1},w_{m}]$ contained in the $x$-axis. 
\end{lemma}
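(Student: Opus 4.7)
The plan is to compute how the Newton polygon transforms under the substitution $x \mapsto \gamma x$ combined with the scaling by $1/(a_n \gamma^n)$, and then choose $\gamma$ so as to make the last segment horizontal at height zero.

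First I would write $R(x) = \sum_{k=1}^{n} b_k x^k$ where $b_k = a_k \gamma^{k-n}/a_n$, so that $\nu_p(b_k) = \nu_p(a_k) - \nu_p(a_n) + (k-n)\nu_p(\gamma)$ and in particular $b_n = 1$. The map $(k, \nu_p(a_k)) \mapsto (k, \nu_p(b_k))$ is a vertical shear, that is, an affine transformation of $\mathbb{R}^2$ that fixes the first coordinate. It therefore sends the convex hull of one finite point set to the convex hull of its image, and it preserves the lower boundary because minima taken in the $y$-direction commute with a shift of the form $y \mapsto y + (\text{affine in }x)$. Consequently $\New_p(R(x))$ has the same vertex indices as $\New_p(f(x))$: if the vertices of $\New_p(f(x))$ are $v_i = (k_i, \nu_p(a_{k_i}))$, then the vertices of $\New_p(R(x))$ are $w_i = (k_i, \nu_p(b_{k_i}))$.

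Next I would compute how the slopes transform. A direct calculation gives the slope of $[w_{i-1}, w_i]$ as $s_i + \nu_p(\gamma)$, where $s_i$ is the slope of $[v_{i-1}, v_i]$. To make $[w_{m-1}, w_m]$ horizontal I must therefore take $\nu_p(\gamma) = -s_m$. Since $s_m$ is a rational number (a difference quotient of rational valuations of algebraic numbers), such an element exists in $\overline{\mathbb{Q}}$: writing $s_m = c/d$ in lowest terms, any algebraic $d$th root of $p^{-c}$ works. Finally, since $\nu_p(b_n) = 0$, we have $w_m = (n, 0)$, so the horizontal segment $[w_{m-1}, w_m]$ lies on the $x$-axis, as required.

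The whole argument is essentially bookkeeping, and I do not anticipate a serious obstacle. The only point that deserves a line of care is the assertion that the vertical shear really identifies the two Newton polygons combinatorially (sending vertices to vertices and the lower boundary to the lower boundary), but this is immediate from the fact that the transformation fixes $x$-coordinates and affinely shifts $y$-coordinates.
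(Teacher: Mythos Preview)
Your argument is correct. It differs from the paper's in one essential way: the paper chooses $\gamma$ to be an actual root of the last Newton factor $f_m(x)$ (so automatically $\nu_p(\gamma)=-s_m$) and then reasons via the roots $\gamma_i/\gamma$ of $R(x)$ using the Newton-polygon factorization lemma, whereas you work entirely on the coefficient side, observing that the map on the point set $(k,\nu_p(a_k))\mapsto(k,\nu_p(b_k))$ is a vertical shear and hence preserves the combinatorics of the lower hull, and then pick any $\gamma$ with $\nu_p(\gamma)=-s_m$. Your route is more elementary in that it avoids appealing to the factorization lemma, and it makes transparent why the number of segments and the vertex abscissae are preserved. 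The paper's route has the minor advantage that the $\gamma$ it produces is actually a root of $f$, which is exactly the choice re-made in the proof of the subsequent theorem; but since that theorem re-selects $\gamma$ explicitly, nothing is lost by your version of the lemma.
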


 \begin{proof}  
 The polynomial $f(x)$ has Newton factorization $f=a_{n}f_{1}(x)...f_{m}(x)$ with the roots of $f_{m}(x)$ of least valuation.
Let $\gamma$ be a root of $f_{m}(x)$. 
For $R(x):=\frac{1}{a_{n}\gamma^{n}}f(\gamma x)$ the roots are of the form $\frac{\gamma_{i}}{\gamma}$ where $\gamma_{i}$ is a root of $f_{i}(x)$. 
Therefore the valuation of a root of $R(x)$ equals $\nu_{p}(\gamma_{i})-\nu_{p}(\gamma)\geq0$.
It follows the slopes of $[w_{i-1},w_{i}]$ of $\New_{p}(R(x))$ are less than zero if $i\neq m$ and equal to zero when $i=m$. 
Since $R(x)$ is monic and $[w_{m-1},w_{m}]$ has slope zero then  $w_{m}$ and $w_{m-1}$ are in the $x$-axis. 
 \end{proof}

\begin{theorem}\label{line}
 If $B(x)$ is a Belyi polynomial of degree less than $p$ with $B(0)=0$, then $New_{p}(B-1)$ is a line segment. 
\end{theorem}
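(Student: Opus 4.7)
My plan is to argue by contradiction via Corollary \ref{derivativedividesall} applied to the mod-$p$ reduction of a rescaled copy of $B$. When $B$ is a monomial the Belyi conditions force $B=x^n$, and then $\New_p(x^n-1)$ is the horizontal segment $[(0,0),(n,0)]$, hence a single line segment; so I may assume $\New_p(B)=[v_0,v_1]\cup\cdots\cup[v_{m-1},v_m]$ with $m\geq 1$. Suppose for contradiction that $\New_p(B-1)$ is not a single line segment. By Theorem \ref{containment} and Lemma \ref{newB-1}, $\New_p(B-1)=[v_{-1},v_0]\cup[v_0,v_1]\cup\cdots\cup[v_{m-1},v_m]$ with $v_{-1}=(0,0)$ and slopes $s_0\leq s_1<\cdots<s_m$ not all equal; convexity forces the backward extrapolation of the last segment to cross the $y$-axis strictly below the origin, so $y_\infty:=\nu_p(a_n)-ns_m<0$.

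Next, I apply Lemma \ref{upper} to $B$ with $\gamma$ a root of the Newton factor $f_m$, so $\nu_p(\gamma)=-s_m$, and set $R(x):=B(\gamma x)/(a_n\gamma^n)\in\mathcal{O}_p[x]$. Then $R$ is monic of degree $n$, $R(0)=0$, and the last segment of $\New_p(R)$ lies on the $x$-axis. Setting $c:=1/(a_n\gamma^n)$ gives $\nu_p(c)=-y_\infty>0$, so $c\in\mathfrak{m}_p$ and $\bar c=0$. The Belyi condition $B'\mid B(1-B)$ rewrites under $y=\gamma x$ as $R'\mid R(c-R)$ in $\overline{\mathbb{Q}}_p[x]$; I write $R(c-R)=R'\cdot S$.

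The critical step is descending this divisibility modulo $p$. Since the last segment of $\New_p(R')$ sits on the $x$-axis, $R'$ has a coefficient of valuation $0$, i.e., $R'$ is primitive. Combined with $R(c-R)\in\mathcal{O}_p[x]$, Gauss's lemma for the valuation ring $\mathcal{O}_p$ forces $S\in\mathcal{O}_p[x]$. Reducing modulo $p$ and using $\bar c=0$ yields $\bar R'\cdot\bar S=-\bar R^2$, so $\bar R'\mid\bar R^2$ in $\mathbb{F}[x]$. Since $\bar R(0)=0$ and $\deg\bar R=n<p$, Corollary \ref{derivativedividesall} forces $\bar R=x^n$.

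To reach the contradiction, I compute $\bar R$ directly from the Newton factorization $B=a_nx^{d_0}f_1\cdots f_m$. Each rescaled factor $\tilde f_i(x):=\gamma^{-\deg f_i}f_i(\gamma x)$ is monic with roots of valuation $s_m-s_i\geq 0$; so $\tilde f_i\equiv x^{\deg f_i}\pmod p$ for $i<m$ while $\bar{\tilde f_m}$ has nonzero constant term and positive degree. Consequently $\bar R=x^{x_{m-1}}\bar{\tilde f_m}$ is not a pure power of $x$, contradicting $\bar R=x^n$. The main obstacle I foresee is the integrality of $S$: without the primitivity of $R'$ and a Gauss-type lemma in the valuation ring $\mathcal{O}_p$, the Belyi divisibility would not descend to $\mathbb{F}[x]$, and the Newton-polygon translation of the hypothesis into $\bar c=0$ would lose its punch.
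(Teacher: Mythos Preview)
Your proof is correct and follows essentially the same route as the paper's: rescale $B$ by a root $\gamma$ of least valuation via Lemma~\ref{upper}, reduce modulo $\mathfrak{m}_p$, and apply Corollary~\ref{derivativedividesall} to force $\bar R$ to be a pure power of $x$, contradicting the fact that $R$ has $d_m$ roots of valuation zero. Your explicit appeal to Gauss's lemma over the valuation ring $\mathcal{O}_p$ to justify that $S\in\mathcal{O}_p[x]$ (and hence that the Belyi divisibility descends to $\mathbb{F}[x]$) fills in a step the paper leaves implicit; apart from the typo $x^{x_{m-1}}$ (you mean the $x$-coordinate of $w_{m-1}$, i.e., $n-d_m$), the two arguments coincide.
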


\begin{proof}
If $B(x)$ is a monomial the result is trivial. 
Now suppose $B(x)=a_{1}x+a_{2}x^{2}+...+a_{n}x^{n}$ with Newton factorization $a_{n}x^{d_{0}}f_{1}(x)...f_{m}(x)$. 
By Lemma \ref{newB-1}, using the already defined notation from Theorem $\ref{containment}$, we see  for $m\geq 1$ 
$$\begin{array}{c}
 \New_{p}(B(x))=[v_{0},v_{1}]\cup...\cup[v_{m-1},v_{m}],\\
 \New(B(x)-1)=[v_{-1},v_{j}]\cup[v_{j},v_{j+1}]\cup...\cup[v_{m-1},v_{m}].
\end{array}$$
By Theorem $\ref{containment}$, $j=0$ so the slopes satisfy $$s_0\leq s_1<s_2<\cdots <s_m.$$
To prove the theorem we must show that $m=1$ and $s_0=s_1$.
Let $\gamma$ be a root of $B(x)$ with least valuation. 
This is a root of $f_{m}(x)$ and $\nu_p(\gamma)=-s_m$.
Let $R(x):=\frac{1}{a_{n}\gamma^{n}}B(\gamma x)$. 
Then the Newton factorization of $R(x)$ is 
$$R(x)=\frac{(\gamma x)^{d_0}}{\gamma^{d_{0}}}\frac{f_{1}(\gamma x)}{\gamma^{d_{1}}}...\frac{f_{m}(\gamma x)}{\gamma^{d_{m}}}$$

By Lemma $\ref{upper}$, 
$$\New_p(R(x))=[w_{0},w_{1}]\cup...\cup[w_{m-1},w_{m}]$$
has $[w_{m-1},w_{m}]$ contained in the $x$-axis.
So $R(x)$ has $d_m$ roots of valuation zero. 
Since the slope of each $[w_{i-1},w_{i}]$, is non-positive
it follows $\New_{p}\left(R(x)\right)$ is contained in the upper half plane, hence $R(x)$ is in $\mathcal{O}_{p}[x]$ as is each of its factors.

As in Theorem $\ref{containment}$, $B(x)-1$ has a Newton factorization 
$$B(x)-1=a_{n}g_{0}(x)g_{1}(x)\cdots g_{m}(x)$$
where $\deg g_i =d_i$ and $g_i$ has roots of valuation $-s_i$.
So
$$R(x)-\frac{1}{a_{n}\gamma^{n}}=\frac{1}{a_{n}\gamma^{n}}\left(B(\gamma x)-1\right)=\frac{g_{0}(\gamma x)}{\gamma^{d_{0}}}\frac{g_{1}(\gamma x)}{\gamma^{d_{1}}}...\frac{g_{m}(\gamma x)}{\gamma^{d_{m}}}\in\mathcal{O}_{p}[x]$$
and  $R(x)-\frac{1}{a_{n}\gamma^{n}}$ also has $d_i$ roots of valuation $(s_i-s_m)$.

Since $R(x)-\frac{1}{a_{n}\gamma^{n}}$  is monic, the product of its roots is $\frac{(-1)^{n+1}}{a_n \gamma^n}\in\mathcal{O}_{p}$, and
$$0\leq \nu_p(\frac{(-1)^{n+1}}{a_n \gamma^n})=\nu_p(\frac{-1}{a_n \gamma^n})=-d_0(s_0-s_m)-d_1(s_1-s_m)-\cdots-d_m(s_m-s_m).$$
With this, we see $\nu_p(\frac{-1}{a_n\gamma^n})=0$ if and only if $(s_i-s_m)=0$.
So in the case where $\nu_p(\frac{(-1)^n}{a_n\gamma^n})=0$ it follows $m$ is necessarily one and $s_0=s_1$.

We conclude the proof by using the reduction map and Corollary $\ref{derivativedividesall}$ to show that the remaining case where
$\nu_p(\frac{-1}{a_n\gamma^n})>0$ leads to a contradiction. 
Since $\deg R(x)<p$ and $R(0)=0$ then $\New_p(R(x))=\New_p(x\cdot R'(x))$. So $R'(x)$ also has leading coefficient and $d_m$ roots of valuation zero. 
In particular $R(x)$, $R(x)-\frac{1}{a_n\gamma^n}$, $R'(x)$, and each of their factors are in $\mathcal{O}_p[x]$ and 
$\pi(R(x))$, $\pi(R(x)-\frac{1}{a_n\gamma^n})$, $\pi(R'(x))$ are nonzero.  
The Belyi condition $B(x)\mid B(x)(B(x)-1)$ imply $R'(x)\mid R(x)(R(x)-\frac{1}{a_n\gamma^n})$.
So $\pi(R'(x))\mid \pi(R(x))\pi(R(x)-\frac{1}{a_n\gamma^n})$. 
But when $\nu_p(\frac{-1}{a_n\gamma^n})>0$,
$\pi(R(x))=\pi(R(x)-\frac{1}{a_n\gamma^n})$. 
We can then apply Corollary $\ref{derivativedividesall}$, which says $\pi(R(x))$ has no nonzero roots. 
But this contradicts the fact that $R(x)$ has $d_m$ roots of valuation zero.

\end{proof}


\begin{theorem}\label{result}
The Belyi height of $\lambda$, $\mathcal{H}(\lambda)$, is greater than or equal to $p$ for $\lambda\neq 0$ in $\overline{\mathbb{Q}}$ with non-zero $p$-adic valuation.
\end{theorem}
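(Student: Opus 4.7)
The plan is to prove the contrapositive: if there exists a Belyi polynomial $B(x)$ of degree $n<p$ with $B(\lambda)\in\{0,1\}$ for some nonzero $\lambda$, then $\nu_p(\lambda)=0$. My first move is to reduce to the case $B(0)=0$. If instead $B(0)=1$, then $\widetilde B(x):=1-B(x)$ has the same derivative up to sign and hence the same critical points, with critical values flipped between $0$ and $1$; it satisfies $\widetilde B(0)=0$ and $\widetilde B(1)\in\{0,1\}$, so it is a Belyi polynomial with $\widetilde B(\lambda)\in\{0,1\}$. Hence we may assume throughout that $B(0)=0$.

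With $\deg B<p$ and $B(0)=0$ in place, Theorem \ref{containment} gives $\New_p(B)\subset\New_p(B-1)$, and Theorem \ref{line} forces $\New_p(B-1)$ to be a single line segment. Since $B(0)-1=-1$ has $p$-adic valuation $0$, this segment must start at the origin; since its right endpoint is $(n,\nu_p(a_n))$, where $a_n$ is the leading coefficient, its slope is $s:=\nu_p(a_n)/n$, and every slope appearing in $\New_p(B)$ is also equal to $s$.

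To finish, apply Lemma \ref{factorization}. The hypothesis $B(\lambda)\in\{0,1\}$ makes $\lambda$ a root of $B$ or of $B-1$, and in either case its valuation equals the negative of a slope of the relevant Newton polygon, so $\nu_p(\lambda)=-s$. But $1$ is likewise a root of $B$ or of $B-1$ (since $B(1)\in\{0,1\}$), forcing $0=\nu_p(1)=-s$, hence $s=0$ and $\nu_p(\lambda)=0$. This contradicts the hypothesis $\nu_p(\lambda)\neq 0$, completing the proof.

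I expect no real obstacle here: the substantive combinatorial content already sits in Theorems \ref{containment} and \ref{line}, and the present argument is essentially bookkeeping. The only care required is to verify that the flip $B\mapsto 1-B$ preserves both Belyi conditions in the initial reduction, and to apply Lemma \ref{factorization} to the correct polynomial (either $B$ or $B-1$) depending on whether $B(\lambda)=0$ or $B(\lambda)=1$, and likewise when reading off the valuation of $1$.
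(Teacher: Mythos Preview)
Your proposal is correct and follows essentially the same approach as the paper's own proof: reduce to $B(0)=0$ via $B\mapsto 1-B$, invoke Theorems \ref{containment} and \ref{line} to force both Newton polygons into a single line segment, then use $1$ as a root of $B$ or $B-1$ to pin down the common slope as zero, contradicting $\nu_p(\lambda)\neq 0$. Your version is slightly more explicit in invoking Theorem \ref{containment} (the paper cites only Theorem \ref{line} at this point, leaving the containment implicit), but the argument is the same.
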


\begin{proof}
If $B(0)=1$, then consider the Belyi polynomial $1-B(x)$, so without loss of generality we may assume $B(0)=0$.
If $\deg(B(x))<p$, then by Theorem $\ref{line}$, $\New(B(x))$ and $\New(B(x)-1)$ are contained in a single line segment. 
Therefore all non-zero roots of $B(x)$ and $B(x)-1$ have the same valuation. 
This means $\nu_p(1)=\nu_p(\lambda)=0$, a contradiction when $\nu_p(\lambda)\neq 0$.
Hence, $\deg B(x)$ must be greater than or equal to $p$.
\end{proof}

With this theorem, we know for every Belyi polynomial with rational number  $\frac{a}{b}$ in lowest terms as a root will have degree greater than or equal to every prime $p$ that divides $ab$.
The well-known Belyi polynomial from Example $\ref{basic}$
$B_{a,b}(x)$
has as its critical points $\{\frac{a}{b},0,1\}$.
Therefore 
$B_{1,p}(\frac{x}{p})$ 
is a normalized Belyi polynomial mapping $p$ to zero showing our bound is sharp.
However, in general, it is not true  $\mathcal{H}(a)\geq a$, for $a \in\mathbb{Z}$, as the following example shows.

\begin{example}\label{counter}

If we consider the Belyi polynomial $B(x)=-\frac{1}{4}(x-1)^2(x-4)$, then $B(4)=0$ and $\mathcal{H}(4)\leq3$. By Theorem $\ref{result}$, $\mathcal{H}(4)\geq 2$.
A direct calculation by solving a quadratic shows that  $\mathcal{H}(4)\neq 2$, so it follows $\mathcal{H}(4)=3$.
\end{example}

We end with a few open questions. 
First, how can one express $\mathcal{H}:\overline{\mathbb{Q}}\rightarrow\mathbb{R}^+$  in a closed form?
By Example $\ref{counter}$ we know this  is not a simple function such as $\mathcal{H}(a)=a$ when we restrict $\mathcal{H}$ to the natural numbers.
Second, when  is $\mathcal{H}(ab)\geq \max\{\mathcal{H}(a),\mathcal{H}(b)\}$ for $a,b\in\mathbb{Z}^+$?
By  Theorem $\ref{result}$ we know $\mathcal{H}(pq)\geq \max\{\mathcal{H}(p),\mathcal{H}(q)\}$ for primes $p$ and $q$.
Third, we ask for fixed $h\in\mathbb{R}^+$ how many distinct $\lambda$ satisfy the inequality 
$\mathcal{H}(\lambda)\leq h\in\mathbb{R}^+$?
In addition, can we adjust the definition of Belyi height so that the number of such $\lambda$ grows on the order of a polynomial as we vary $h$. 
Finally, does there exist a unique Belyi polynomial of degree equal to $\mathcal{H}(\lambda)$ with $\lambda$ as one of its roots? If not can we classify such polynomials, and do they have the same Newton polygon?

I would like to especially thank  Eric Katz for supervising this research and the Ronald E. McNair Postbaccalaureate Achievement Program for funding this project.

\bibliography{belyi}{}

\begin{thebibliography}{10}

\bibitem{Bau06}
Ingrid Bauer, Fabrizio Catanese, and Fritz Grunewald.
\newblock Chebycheff and {B}elyi polynomials, dessins d'enfants, {B}eauville
  surfaces and group theory.
\newblock {\em Mediterr. J. Math.}, 3(2):121--146, 2006.

\bibitem{Beckm89}
Sybilla Beckmann.
\newblock Ramified primes in the field of moduli of branched coverings of
  curves.
\newblock {\em J. Algebra}, 125(1):236--255, 1989.

\bibitem{Belyi83}
G.~V. Bely{\u\i}.
\newblock On extensions of the maximal cyclotomic field having a given
  classical {G}alois group.
\newblock {\em J. Reine Angew. Math.}, 341:147--156, 1983.

\bibitem{Ellen02}
Jordan~S. Ellenberg.
\newblock Galois invariants of dessins d'enfants.
\newblock In {\em Arithmetic fundamental groups and noncommutative algebra
  ({B}erkeley, {CA}, 1999)}, volume~70 of {\em Proc. Sympos. Pure Math.}, pages
  27--42. Amer. Math. Soc., Providence, RI, 2002.

\bibitem{Engle05}
Antonio~J. Engler and Alexander Prestel.
\newblock {\em Valued fields}.
\newblock Springer Monographs in Mathematics. Springer-Verlag, Berlin, 2005.

\bibitem{Gouve97}
Fernando~Q. Gouv{\^e}a.
\newblock {\em {$p$}-adic numbers:An introduction}.
\newblock Universitext. Springer-Verlag, Berlin, second edition, 1997.

\bibitem{Khadj02}
Lily~S. Khadjavi.
\newblock An effective version of {B}elyi's theorem.
\newblock {\em J. Number Theory}, 96(1):22--47, 2002.

\bibitem{Zappo08}
Zapponi Leaonardo.
\newblock On the degree of a belyi number field.
\newblock {\em Arxiv}, 2008.

\bibitem{Miran95}
Rick Miranda.
\newblock {\em Algebraic curves and {R}iemann surfaces}, volume~5 of {\em
  Graduate Studies in Mathematics}.
\newblock American Mathematical Society, Providence, RI, 1995.

\bibitem{Pilgr00}
Kevin~M. Pilgrim.
\newblock Dessins d'enfants and {H}ubbard trees.
\newblock {\em Ann. Sci. \'Ecole Norm. Sup. (4)}, 33(5):671--693, 2000.

\bibitem{Schne94}
Leila Schneps, editor.
\newblock {\em The {G}rothendieck theory of dessins d'enfants}, volume 200 of
  {\em London Mathematical Society Lecture Note Series}.
\newblock Cambridge University Press, Cambridge, 1994.
\newblock Papers from the Conference on Dessins d'Enfant held in Luminy, April
  19--24, 1993.

\bibitem{Weiss98}
Edwin Weiss.
\newblock {\em Algebraic number theory}.
\newblock Dover Publications Inc., Mineola, NY, 1998.
\newblock Reprint of the 1963 original.

\bibitem{wood06}
Melanie~Matchett Wood.
\newblock Belyi-extending maps and the {G}alois action on dessins d'enfants.
\newblock {\em Publ. Res. Inst. Math. Sci.}, 42(3):721--737, 2006.

\end{thebibliography}
\bibliographystyle{plain}
\end{document}